\newtheorem{theorem}{Theorem}
\newtheorem*{mainlemma}{Main Lemma}
\newtheorem{corollary}{Corollary}
\newtheorem{definition}{Definition}
\newtheorem{example}{Example}
\newtheorem{lemma}{Lemma}
\newtheorem{problem}{Problem}
\newtheorem{proposition}{Proposition}
\newtheorem{remark}{Remark}
\numberwithin{equation}{section}
\begin{document}
\title{Flats and Submersions in Non-Negative Curvature}
\author{Curtis Pro}
\address{Department of Mathematics\\
University of California, Riverside}
\email{pro@math.ucr.edu}
\author{Frederick Wilhelm}
\address{Department of Mathematics\\
University of California, Riverside}
\email{fred@math.ucr.edu}

\begin{abstract}
We find constraints on the extent to which O'Neill's horizontal curvature
equation can be used to create positive curvature on the base space of a
Riemannian submersion. In particular, we study when K. Tapp's theorem on
Riemannian submersions of compact Lie groups with bi-invariant metrics
generalizes to arbitrary manifolds of non-negative curvature.
\end{abstract}

\maketitle

Until very recently all examples of compact, positively curved manifolds
were constructed as the image of a Riemannian submersion of a Lie group with
a bi-invariant metric (\cite{Dear,GrovVerdZil,PetWilh2}). Earlier
constructions of positive curvature in \cite{AlfWal,Baz,Berg1}, and \cite%
{Esch1,Esch2,Esch3} combined the fact that Lie groups with bi-invariant
metrics are non-negatively curved with the so called Horizontal Curvature
Equation, 
\begin{equation*}
\mathrm{sec}_{B}\left( x,y\right) =\mathrm{sec}_{M}\left( \tilde{x},\tilde{y}%
\right) +3\left\vert A_{\tilde{x}}\tilde{y}\right\vert ^{2}
\end{equation*}%
\cite{Gray,O'Neill}. Here $\pi :M\rightarrow B$ is a Riemannian submersion, $%
\left\{ x,y\right\} $ is an orthonormal basis for a plane in a tangent space
to $B,$ $\left\{ \tilde{x},\tilde{y}\right\} $ is a horizontal lift of $%
\left\{ x,y\right\} ,$ and $A$ is the \textquotedblleft integrability
tensor\textquotedblright\ for the horizontal distribution---that is, 
\begin{equation*}
A_{\tilde{x}}\tilde{y}\equiv \frac{1}{2}[\tilde{X},\tilde{Y}]^{\mathrm{vert}}
\end{equation*}%
where $\tilde{X}$ and $\tilde{Y}$ are arbitrary extensions of $\tilde{x}$
and $\tilde{y}$ to horizontal vector fields.

Since the Horizontal Curvature Equation decomposes $\mathrm{sec}_{B}\left(
x,y\right) $ into the sum of two non-negative quantities, we see immediately
that Riemannian submersions preserve non-negative curvature. In addition, if 
\emph{either} term on the right is positive, then $\mathrm{sec}_{B}\left(
x,y\right) >0.$ Naively, one might expect positively curved examples to be
constructed by exploiting the full power of the Horizontal Curvature
Equation; however, a survey of the examples shows that this has never been
done. In the context in which the examples in \cite{AlfWal,Baz,Berg1,
Esch1,Esch2,Esch3}, and \cite{Wal} were constructed, it is impossible for a
Riemannian submersion to create positive curvature via the $A$--tensor
alone. In fact, in \cite{Tapp} Tapp shows

\begin{theorem}[Tapp]
\label{tappt} Let $\pi :G\rightarrow B$ be a Riemannian submersion of a
compact Lie group with a bi-invariant metric. Then

\begin{description}
\item[1] Every zero-curvature plane of $B$ exponentiates to a flat (meaning
a totally geodesic immersion of $\mathbb{R}^{2}$ with a flat metric)$,$ and

\item[2] Every horizontal zero-curvature plane of $G$ projects to a
zero-curvature plane of $B.$
\end{description}
\end{theorem}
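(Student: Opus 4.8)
The plan is to combine O'Neill's equation with the two special features of a bi-invariant metric: its curvature tensor is $R(u,v)w=-\frac14[[u,v],w]$ (after left-translating to the Lie algebra $\mathfrak{g}$), and its geodesics through $g$ are the translated one-parameter subgroups $t\mapsto g\exp(tu_{0})$. The first fact shows that a plane of $G$ has zero curvature if and only if it is spanned by commuting vectors, in which case $R(u,v)\equiv0$ on all of $T_{g}G$; such a plane is tangent to the totally geodesic flat torus orbit $F=g\exp(\mathfrak{a})$, where $\mathfrak{a}=\mathrm{span}(u_{0},v_{0})$ is abelian. Feeding the Horizontal Curvature Equation the hypothesis $\mathrm{sec}_{B}(x,y)=0$ and using that both terms on the right are non-negative, I get, for any horizontal lift, that $\mathrm{sec}_{G}(\tilde{x},\tilde{y})=0$ and $A_{\tilde{x}}\tilde{y}=0$ simultaneously; conversely, for Part 2 a horizontal zero-curvature plane already gives $\mathrm{sec}_{G}(\tilde{x},\tilde{y})=0$, and it remains only to prove $A_{\tilde{x}}\tilde{y}=0$ to conclude $\mathrm{sec}_{B}(x,y)=0$. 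Thus both assertions reduce to understanding the $A$-tensor along flats.

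For Part 2 I would prove the pointwise lemma: if $\tilde{x},\tilde{y}$ are horizontal and $[\tilde{x},\tilde{y}]=0$, then $A_{\tilde{x}}\tilde{y}=0$. Left-translate $\tilde{x},\tilde{y}$ to $u_{0},v_{0}\in\mathfrak{a}$ and form the geodesic $\gamma(t)=g\exp(tu_{0})$, which is horizontal at $t=0$, hence horizontal for all $t$ and projecting to a geodesic of $B$. Extend $\tilde{y}$ to the left-invariant field $Y_{0}$ along $\gamma$; since $\nabla_{\dot\gamma}Y_{0}=\frac12[u_{0},v_{0}]=0$, the field $Y_{0}$ is parallel along $\gamma$ and tangent to $F$, and $R_{G}(Y_{0},\dot\gamma)\dot\gamma=0$. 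I would then study the holonomy field along $\gamma$ determined by $\tilde{y}$: decomposing it into horizontal and vertical parts and inserting it into O'Neill's mixed-curvature equations, the vanishing of the ambient curvature in the flat directions together with the parallelism of $Y_{0}$ should force the Riccati-type evolution governing $A_{\dot\gamma}$ to be trivial, whence $A_{\dot\gamma}Y_{0}\equiv0$ and in particular $A_{\tilde{x}}\tilde{y}=0$ at $t=0$.

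Granting the lemma, Part 1 follows by exponentiating. The radial geodesics $t\mapsto g\exp(tw_{0})$, $w_{0}\in\mathfrak{a}$, are all horizontal at $g$ and hence horizontal everywhere, so every radial direction of $F$ is horizontal. The transverse direction of $F$ along such a geodesic is a holonomy (Jacobi) field which, because $F$ is flat and $A$ vanishes on the directions already known to be horizontal, stays horizontal; propagating this shows that $F$ is horizontal at every point. Then $\pi|_{F}$ is a local isometry onto its image: horizontal geodesics project to geodesics, so $\pi(F)$ is totally geodesic, and a local isometry carries the flat metric of $F$ to a flat metric on $\pi(F)$. Hence the zero-curvature plane $\{x,y\}=d\pi\{\tilde{x},\tilde{y}\}$ exponentiates to the flat $\pi(F)$, and every horizontal zero-curvature plane of $G$ projects to one of $B$.

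The main obstacle is the lemma of Part 2 — equivalently, showing that $F$ remains horizontal away from $g$. Knowing $A_{\tilde{x}}\tilde{y}=0$ only at the single point $g$ does not by itself force horizontality of $F$, because a geodesic that is horizontal at one point controls only the radial tangent direction; the transverse direction must be controlled by a genuine variational, holonomy-field argument. The crux is therefore to convert the flatness of the ambient directions and the non-negativity of $\mathrm{sec}_{G}$ into the statement that the Riccati solution associated with $A_{\dot\gamma}$ cannot develop a nonzero vertical component along $\gamma$; this is where the rigidity special to bi-invariant metrics must be exploited.
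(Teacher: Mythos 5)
A point of comparison first: the paper itself does not prove Theorem \ref{tappt}---it is quoted from \cite{Tapp}---so your proposal can only be measured against the machinery of Section 1 (Proposition \ref{R-0}, Lemmas \ref{jfl}, \ref{bjfl}, \ref{cl}, and the Main Lemma), which is the paper's generalization of Tapp's argument. Your O'Neill reduction is correct and matches that strategy's first step: both parts come down to showing that for horizontal $\tilde{x},\tilde{y}$ with commuting left-translations one has $A_{\tilde{x}}\tilde{y}=0$, equivalently that the flat $F=\exp (\tilde{\sigma})$ is horizontal at every point and not merely along its radial geodesics. But this key lemma \emph{is} the content of the theorem, and you do not prove it: you assert that flatness of the ambient directions and parallelism of $Y_{0}$ ``should force the Riccati-type evolution governing $A_{\dot{\gamma}}$ to be trivial,'' and you yourself flag this step as the main obstacle. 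The same unproved step is invoked a second time in your Part 1, where you claim the transverse direction of $F$ ``stays horizontal.''

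This is not an omitted computation but a gap in the method: the mechanism you propose makes no use of the compactness of $G$, and the statement you are trying to prove is false without compactness. The paper's Example \ref{Cheeg} is a counterexample to your pointwise lemma: $S^{1}\times \mathbb{R}^{2}$ with the flat product metric is an abelian Lie group with bi-invariant metric, so every horizontal pair $\tilde{x},\tilde{y}$ has commuting translates, every left-invariant field along a geodesic is parallel, and the curvature tensor vanishes identically---all the inputs to your argument hold---yet for the Cheeger submersion $Q:(S^{1}\times \mathbb{R}^{2},s+g)\rightarrow (\mathbb{R}^{2},\bar{g})$ the base is positively curved, so by the Horizontal Curvature Equation $A_{\tilde{x}}\tilde{y}\neq 0$ on every horizontal plane. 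Hence no Riccati-type argument from flatness and parallelism alone can yield $A_{\tilde{x}}\tilde{y}=0$. The missing ingredient is the one Tapp's proof (and the paper's Main Lemma) actually turns on: along a horizontal geodesic in a \emph{compact} Lie group with bi-invariant metric, all holonomy Jacobi fields are bounded. Granting boundedness, the argument is exactly Lemmas \ref{jfl}, \ref{bjfl}, and \ref{cl}: against the parallel Jacobi field $J_{0}$ spanning the flat with $\dot{\gamma}$, any holonomy field decomposes as $V(t)=btJ_{0}(t)+W(t)$ with $W,W^{\prime }\perp J_{0}$ (this rests on Proposition \ref{R-0}); boundedness forces $b=0$, so $V\perp J_{0}$ for all $t$; since holonomy fields span the vertical distribution, $J_{0}$ stays horizontal; then $\exp (\tilde{\sigma})$ is everywhere horizontal and the Horizontal Curvature Equation along it gives both conclusions. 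In Example \ref{Cheeg} the holonomy fields grow linearly, which is precisely why the conclusion fails there; ruling out that linear growth is where compactness must enter, and in your proposal it never does.
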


In the case of bi-quotients of Lie groups, this is a consequence of an
equation in \cite{GromMey}. This was first observed explicitly in \cite%
{Wilk1}.

Examples \ref{FB1} and \ref{FB2} (below) show that the theorem fails if the
Lie group $G$ is replaced by an arbitrary, compact, non-negatively curved
Riemannian manifold $M$. The inhomogeneous metrics of these examples have
zero-planes whose exponentials are locally, but not globally, flat.

Recall, if $\sigma $ is a zero-curvature plane in a Lie group $G$ with
bi-invariant metric, then $\exp (\sigma )$ is a (globally) flat submanifold
of $G$. So it is natural to ask about the extent to which Tapp's theorem
holds if $\sigma $ is assumed to be a horizontal zero-curvature plane whose
exponential image is a flat submanifold of $M$. More formally, we pose:

\begin{problem}
If $\pi :M\rightarrow B$ is a Riemannian submersion of a compact,
non-negatively curved manifold $M$ and $\sigma $ is a horizontal
zero-curvature plane in $M$ such that $\exp (\sigma )$ is a flat
submanifold, does it follow that $\pi _{\ast }(\sigma )$ is a zero-curvature
plane in $B$?
\end{problem}

We emphasize that the given flat is not assumed to be globally horizontal.

The following easy consequence of Lemma 1.5 in \cite{StraWals} shows that an
affirmative answer to our problem implies that both $M$ and $B$ have a lot
of additional structure.

\begin{theorem}
\label{t1} Let $\pi :M\rightarrow B$ be a Riemannian submersion of complete,
non-negatively curved manifolds. Let $\sigma $ be a zero-curvature plane in $%
B$ and $\tilde{\sigma}$ a horizontal lift of $\sigma $ so that $\exp (\tilde{%
\sigma})$ is a flat in $M$. Then

\begin{description}
\item[1] The plane $\sigma $ exponentiates to a flat in $B$, and

\item[2] Every horizontal lift of $\sigma $ exponentiates to a horizontal
flat in $M$.
\end{description}
\end{theorem}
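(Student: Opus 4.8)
The plan is to reduce both conclusions to a single propagation statement along horizontal geodesics and then invoke Lemma 1.5 of \cite{StraWals} to carry it out. First I would exploit the Horizontal Curvature Equation pointwise. Writing $\{x,y\}$ for an orthonormal basis of $\sigma$ and $\{\tilde{x},\tilde{y}\}$ for the corresponding basis of $\tilde{\sigma}$, the hypothesis $\mathrm{sec}_{B}(x,y)=0$ together with $\mathrm{sec}_{M}\geq 0$ forces both $\mathrm{sec}_{M}(\tilde{x},\tilde{y})=0$ and $|A_{\tilde{x}}\tilde{y}|^{2}=0$, so that $A$ already annihilates $\tilde{\sigma}$ at the foot point $p$; the assumed flatness of $\exp(\tilde{\sigma})$ is the additional global input to be propagated. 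Thus at its center the flat is simultaneously horizontal, flat, and $A$-free.

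Next I would record the elementary fact that a geodesic of $M$ which is horizontal at one point is horizontal everywhere, and hence projects to a geodesic of $B$: the horizontal lift of a geodesic of $B$ is again a geodesic of $M$, so the geodesic in question must coincide with such a lift by uniqueness of geodesics with prescribed initial data. Applying this to the radial geodesics $s\mapsto \exp_{p}\bigl(s(\cos\theta\,\tilde{x}+\sin\theta\,\tilde{y})\bigr)$, all of which lie in the flat $F=\exp(\tilde{\sigma})$, shows that $F$ is horizontal along every ray from its center, and that these rays project to the geodesics $s\mapsto\exp\bigl(s(\cos\theta\,x+\sin\theta\,y)\bigr)$ sweeping out $\exp(\sigma)$.

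The heart of the matter is to upgrade this radial horizontality to horizontality of the entire flat, i.e.\ to show that the frame $\{E_{1},E_{2}\}$ parallel along $F$ (parallel in $M$ since $F$ is totally geodesic) stays horizontal. Along a horizontal geodesic $\gamma\subset F$, the vertical component $V$ of a parallel field obeys a linear system whose forcing term is built from $A_{\dot{\gamma}}$ and $A^{\ast}_{\dot{\gamma}}$; since $V(0)=0$, it is precisely the persistence of $A\equiv 0$ along $\gamma$ that keeps $V\equiv 0$. Supplying this persistence is exactly where I would invoke Lemma 1.5 of \cite{StraWals}: in non-negative curvature, along a horizontal geodesic issuing from an $A$-free zero-curvature horizontal plane the relevant Jacobi and holonomy fields cannot develop a vertical part, so $A$ continues to annihilate the transported frame and the plane stays horizontal and flat. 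This is the step I expect to be the main obstacle, because the pointwise vanishing at $p$ gives no a priori control away from $p$; everything hinges on the rigidity that non-negative curvature provides through Lemma 1.5.

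With $F$ known to be horizontal, the conclusions follow quickly. Since $d\pi$ is a linear isometry on horizontal vectors, $\pi|_{F}$ is a local isometry; as $F$ is totally geodesic and horizontal its geodesics project to geodesics of $B$, so the image is a totally geodesic immersed flat, necessarily $\exp(\sigma)$, giving conclusion 1. For conclusion 2 I would run the argument in reverse: given any point $q$ in the fiber over $\pi(p)$ and the horizontal lift $\sigma_{q}$ of $\sigma$ at $q$, the Horizontal Curvature Equation again yields $\mathrm{sec}_{M}|_{\sigma_{q}}=0$ and $A|_{\sigma_{q}}=0$, and lifting $\exp(\sigma)$ horizontally from $q$ while applying the same propagation from Lemma 1.5 shows $\exp(\sigma_{q})$ is a horizontal flat. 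Hence every horizontal lift of $\sigma$ exponentiates to a horizontal flat.
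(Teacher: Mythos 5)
You have the right skeleton---pointwise vanishing of $\mathrm{sec}_{M}$ and $A$ on $\tilde{\sigma}$ via the Horizontal Curvature Equation, horizontality of the radial geodesics, and the reduction of everything to showing the parallel frame tangent to the flat stays horizontal---and you correctly identify that last step as the heart. But that step is exactly what your proposal does not prove. You outsource it to ``Lemma 1.5 of \cite{StraWals}'' without stating that lemma, and the mechanism you sketch for it is circular: the ODE for the vertical component of a parallel field is forced by $A_{\dot{\gamma}}$ applied to its horizontal part, so to conclude $V\equiv 0$ you need $A$ to annihilate the transported frame for \emph{all} $t$, which is precisely the horizontality statement being proved; the vanishing of $A$ at $t=0$ gives no control for $t\neq 0$, as you note yourself. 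The paper closes this gap with two concrete ingredients. First, Lemma \ref{jfl}: because $R(J_{0},\gamma ^{\prime })\gamma ^{\prime }=0$ for the parallel Jacobi field $J_{0}$ tangent to the flat (Proposition \ref{R-0}, which is where non-negative curvature enters), every normal Jacobi field along $\gamma $ decomposes as $(a+bt)J_{0}(t)+W(t)$ with $W,W^{\prime }\perp J_{0}$. Second, an initial-data computation for a holonomy field $V$: verticality of $V(0)$ gives $a=0$, while the holonomy condition (\ref{hfd}) plus skew-symmetry of $A_{\gamma ^{\prime }}$ gives $\langle J_{0},V^{\prime }\rangle (0)=-\langle A_{\gamma ^{\prime }}J_{0},V\rangle (0)=0$, which by the decomposition also equals $b\left\vert J_{0}(0)\right\vert ^{2}$; hence $b=0$, so $V\perp J_{0}$ for all time. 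Since holonomy fields span the vertical distribution, $J_{0}$ stays horizontal everywhere. The point is that the global conclusion is extracted from data at $t=0$ alone---no persistence of $A$ along $\gamma $ is assumed or needed. Without this argument (or an explicit statement and hypothesis-check of the Strake--Walschap lemma, which the paper's introduction does credit as the source of the theorem), your proof has a hole at the step you yourself call the main obstacle.

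Your conclusion 2 is also off target. ``Running the same propagation in reverse'' at another point $q$ of the fiber cannot work, because any version of the propagation step takes as input a flat in $M$ through the point, and at $q$ the existence of such a flat is the conclusion, not a hypothesis; all the Horizontal Curvature Equation gives you at $q$ is the pointwise data $\mathrm{sec}_{M}(\sigma _{q})=0$ and $A|_{\sigma _{q}}=0$. The paper instead proves conclusion 2 by Lemma \ref{Stupid}: once conclusion 1 makes $\exp (\sigma )$ a totally geodesic flat in $B$, the Horizontal Curvature Equation applied to \emph{every} tangent plane of that flat shows that the horizontal lifts of those planes form an integrable, horizontal $2$-plane distribution over it whose integral leaves are totally geodesic; the leaf through $q$ is $\exp (\sigma _{q})$, a horizontal flat. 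That is a distribution/integrability argument over the whole base flat, not a geodesic-by-geodesic propagation from $q$.
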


In Theorem \ref{t1}, we do not require that $M$ is compact; on the other
hand, without compactness, the answer to Problem 1 is \textquotedblleft
no\textquotedblright , even when $M$ is a Lie group.

\begin{example}
\label{Cheeg}Let $(\mathbb{R}^{2},\bar g)$ be the Cheeger deformation of $%
\mathbb{R}^{2}$ obtained from the standard $S^{1}$ action on $\mathbb{R}^{2}$%
. Let $s$ and $g$ be the usual metrics on $S^{1}$ and $\mathbb{R}^{2}$,
respectively. Recall that $\bar g$ is defined so that the quotient map, 
\begin{equation*}
Q:(S^{1}\times \mathbb{R}^{2},s+g)\rightarrow (\mathbb{R}^{2},\bar g)
\end{equation*}%
given by $Q(z,q)=\bar{z}q$ is a Riemannian submersion. This new metric is
positively curved and is a paraboloid asymptotic to a cylinder of radius 1.
All horizontal planes have zero curvature, but each projects to a positively
curved plane. So positive curvature is created via the $A$-tensor alone.
\end{example}

\begin{example}
\label{FB1} (Fish Bowl) Let $\psi :\left[ 0,\pi \right] \longrightarrow 
\mathbb{R}$ be a smooth, concave-down function that satisfies 
\begin{equation*}
\psi \left( t\right) =\left\{ 
\begin{array}{cc}
t & \text{for }t\in \left[ 0,\frac{\pi }{4}\right] \\ 
\pi -t & \text{for }t\in \left[ \frac{3\pi }{4},\pi \right]%
\end{array}%
\right.
\end{equation*}%
Consider the warped product metric 
\begin{equation*}
g_{\psi }=dt^{2}+\psi ^{2}d\theta ^{2}
\end{equation*}%
on $S^{2}=\left[ 0,\pi \right] \times _{\psi }S^{1}$. As before, $S^{1}$
acts isometrically on $\left( S^{2},g_{\psi }\right) ,$ so we get a
Riemannian submersion 
\begin{equation*}
\left( S^{2},g_{\psi }\right) \times S^{1}\longrightarrow \left( S^{2},\bar{g%
}_{\psi }\right) ,
\end{equation*}%
where $\bar{g}_{\psi }$ is the metric induced by the submersion. Notice that 
$\left( S^{2},g_{\psi }\right) \times S^{1}$ is flat in a neighborhood of
the set $\left\{ 0,\pi \right\} \times S^{1},$ but, as in Example \ref{Cheeg}%
, $\left( S^{2},\bar{g}_{\psi }\right) $ is positively curved in the image
of this neighborhood. If, in addition, 
\begin{equation*}
\psi ^{\prime \prime }|_{\left( \frac{\pi }{4},\frac{3\pi }{4}\right) }<0,
\end{equation*}%
then $\left( S^{2},\bar{g}_{\psi }\right) $ is positively curved. This shows
that even in the compact case, the $A$-- tensor can be responsible for
creating positive curvature and that conclusion 2 of Tapp's Theorem fails
for arbitrary Riemannian submersions of compact, nonnegatively curved
manifolds.
\end{example}

\begin{example}
\label{FB2} To see how conclusion 1 of Tapp's theorem can fail to hold,
choose $\psi $ in the previous example to be constant in a neighborhood of $%
\pi /2$. This makes $\left( S^{2},g_{\psi }\right) $ isometric to a flat
cylinder near a neighborhood of the equator. In the Cheeger deformed metric,
the image of this region is a smaller flat cylinder. Since the base, $\left(
S^{2},\bar{g}_{\psi }\right) ,$ is not flat, we have zero--curvature planes
near the equator that do not exponentiate to flats.
\end{example}

If we assume the fibers of the submersion are totally geodesic, then, even
in the non-compact case, the conclusion of Tapp's theorem holds.

\begin{theorem}
\label{tgft} Let $\pi :M\rightarrow B$ be a Riemannian submersion of
complete, non-negatively curved manifolds with totally geodesic fibers. Let $%
\tilde{\sigma}$ be a horizontal zero-curvature plane in $M$ such that $\exp (%
\tilde{\sigma})$ is a flat. Then

\begin{description}
\item[1] $\tilde{\sigma}$ projects to a zero-curvature plane $\sigma $ in $B$
that exponentiates to a flat submanifold of $B$, and

\item[2] Every horizontal lift of $\sigma $ exponentiates to a horizontal
flat in $M$.
\end{description}
\end{theorem}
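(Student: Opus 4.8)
The plan is to reduce the whole statement to the single assertion that the integrability tensor vanishes on $\tilde\sigma$, and then to invoke Theorem \ref{t1}. Writing $x=\pi_{\ast}\tilde x$ and $y=\pi_{\ast}\tilde y$ for an orthonormal basis of $\tilde\sigma$, the Horizontal Curvature Equation together with $\mathrm{sec}_{M}(\tilde x,\tilde y)=0$ gives $\mathrm{sec}_{B}(x,y)=3|A_{\tilde x}\tilde y|^{2}$. Thus if I can show $A_{\tilde x}\tilde y=0$, then $\sigma:=\pi_{\ast}(\tilde\sigma)$ is a zero-curvature plane, $\tilde\sigma$ is a horizontal lift of $\sigma$ whose exponential is a flat by hypothesis, and Theorem \ref{t1} immediately yields conclusion $1$ (that $\sigma$ exponentiates to a flat in $B$) and conclusion $2$ (that every horizontal lift of $\sigma$ exponentiates to a horizontal flat). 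So the real content is the vanishing of $A_{\tilde x}\tilde y$, and I would prove the equivalent geometric statement that the flat $F:=\exp(\tilde\sigma)$ is everywhere horizontal: once $F$ is tangent to $\mathcal{H}$ along all of $F$, I may extend $\tilde x,\tilde y$ to horizontal fields tangent to $F$, whose bracket is again tangent to $F$ and hence horizontal, so that $A_{\tilde x}\tilde y=\tfrac12[\tilde X,\tilde Y]^{\mathrm{vert}}=0$.

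To analyze $F$, I would parametrize it by a parallel orthonormal frame; this is legitimate because $F$ is flat and totally geodesic in $M$, so a frame that is parallel in $F$ is parallel in $M$, and every coordinate line of such a frame is an $M$-geodesic. The radial geodesics of $F$ issuing from $p$ have horizontal initial velocity, hence are horizontal geodesics; consequently $T_{q}F$ at $q=\exp_{p}(w)$ is spanned by the horizontal radial velocity and by $J(1)$, where $J$ is the Jacobi field along $t\mapsto\exp_{p}(tw)$ with $J(0)=0$ and $J'(0)\in\tilde\sigma$. Flatness forces $J(t)=t\,P_{t}\big(J'(0)\big)$, with $P_{t}$ parallel transport, so $F$ is horizontal precisely when parallel transport along these horizontal geodesics preserves horizontality. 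Decomposing a transported field $U=h+w$ into horizontal and vertical parts and using O'Neill's tensor identities with $T\equiv 0$ (totally geodesic fibers), the equation $\nabla_{\dot\gamma}U=0$ becomes the coupled system $(\nabla_{\dot\gamma}h)^{\mathcal{H}}=-A_{\dot\gamma}w$ and $(\nabla_{\dot\gamma}w)^{\mathcal{V}}=-A_{\dot\gamma}h$, from which $w(0)=0$ and $(\nabla_{\dot\gamma}w)(0)=-A_{\tilde y}\tilde x$. In particular $A_{\tilde x}\tilde y=0$ is exactly the statement that $w$ vanishes to first order, i.e.\ that $F$ is horizontal to second order at $p$.

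The main obstacle is to upgrade this infinitesimal statement to the honest vanishing of $A_{\tilde x}\tilde y$, and this is where completeness, $\mathrm{sec}_{M}\ge 0$, and totally geodesic fibers must be used together; that no two of them suffice is shown by Example \ref{FB1}, where a flat through a horizontal zero-plane has $A\ne 0$ while the fibers fail to be totally geodesic. I would finish by a Toponogov comparison made rigid by the totally geodesic hypothesis: the right triangle in $F$ with horizontal legs $\exp_{p}(u\tilde x)$ and $\exp_{p}(v\tilde y)$ projects isometrically on its legs to a right-angled hinge in $B$ with the same side lengths, so $\mathrm{sec}_{B}\ge 0$ gives $d_{B}\big(\pi\exp_{p}(u\tilde x),\pi\exp_{p}(v\tilde y)\big)\le\sqrt{u^{2}+v^{2}}$; meanwhile $d_{B}$ equals the distance between the corresponding fibers (true for any Riemannian submersion), and totally geodesic fibers guarantee this distance is realized by a horizontal minimal geodesic. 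Controlling the holonomy displacement of the lift of that minimizer would force the reverse inequality, hence equality, hence a flat in $B$ tangent to $\sigma$ and $\mathrm{sec}_{B}(\sigma)=0$. I expect the displacement estimate—showing that the horizontal lift of the base minimizer returns to the flat rather than drifting into the fiber, equivalently that the bounded vertical field $w$ above vanishes identically—to be the crux of the argument. With $A_{\tilde x}\tilde y=0$ established, Theorem \ref{t1} completes both conclusions.
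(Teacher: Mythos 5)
There is a genuine gap, and you name it yourself: everything is reduced to the single claim $A_{\tilde{x}}\tilde{y}=0$ at $p$, but that claim is never proved. The reduction itself is sound --- once $\mathrm{sec}_{B}(\sigma )=3\left\vert A_{\tilde{x}}\tilde{y}\right\vert ^{2}=0$ is known, $\tilde{\sigma}$ is a horizontal lift of the zero-curvature plane $\sigma $ whose exponential is a flat, so Theorem \ref{t1} does deliver both conclusions --- and your ODE bookkeeping for the parallel field $U=h+w$ with $T\equiv 0$ is correct (modulo the harmless sign $-A_{\tilde{y}}\tilde{x}$ versus $A_{\tilde{x}}\tilde{y}$). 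But the Toponogov finish does not close the loop. The hinge comparison yields only the upper bound $d_{B}\leq \sqrt{u^{2}+v^{2}}$, which holds in any non-negatively curved base and carries no information about $A$; the ``reverse inequality'' has no identified source, and in fact it is essentially equivalent to what you are trying to prove: since the flat itself provides a path of length $\sqrt{u^{2}+v^{2}}$ from $\exp _{p}(u\tilde{x})$ to $\exp _{p}(v\tilde{y})$, the distance between the corresponding fibers is a priori \emph{at most} $\sqrt{u^{2}+v^{2}}$, and bounding it from below is precisely the statement that the flat does not drift into the fibers. Example \ref{Cheeg} shows that parallel transport of horizontal vectors along horizontal geodesics can genuinely acquire vertical components, so your first-order system alone cannot force $w\equiv 0$; some global input beyond the coupled ODE and the unquantified ``displacement estimate'' is required.

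The missing input is exactly the paper's mechanism, which is infinitesimal rather than synthetic. Totally geodesic fibers give $T\equiv 0$, so by (\ref{hfd}) every holonomy field has \emph{constant} norm: $\langle V,V\rangle ^{\prime }=2\langle V,T_{V}\dot{\gamma}\rangle =0$. Your transported field $U=P_{t}(\tilde{y})$ is moreover a Jacobi field $J_{0}$ (it is tangent to the totally geodesic flat), and since $\mathrm{span}\{\dot{\gamma},J_{0}\}$ has zero curvature in a non-negatively curved manifold, Proposition \ref{R-0} gives $R(J_{0},\dot{\gamma})\dot{\gamma}=0$. Lemma \ref{jfl} then decomposes any holonomy field as $V(t)=btJ_{0}(t)+W(t)$ with $W,W^{\prime }\perp J_{0}$, and boundedness of $\left\vert V\right\vert $ forces $b=0$, so $V\perp J_{0}$ for all $t$; since the holonomy fields span the vertical distribution along $\gamma $, this shows $J_{0}$ stays horizontal, i.e.\ your $w\equiv 0$ (Lemma \ref{bjfl}). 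Lemmas \ref{cl} and \ref{Stupid} then give both conclusions. So the vertical drift is killed by playing the flat's parallel Jacobi field against the constant-norm holonomy fields --- not by a rigidity argument in the base --- and without this (or some substitute for it) your proposal is a correct reduction attached to an unproven crux.
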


We also give an affirmative answer to Problem 1 in the special case when the
submersion is induced by an isometric group action with only principal
orbits.

\begin{theorem}
\label{t2} Let a compact Lie group $G$ act by isometries on a compact,
non-negatively curved manifold $M$. Suppose all of the orbits are principal,
and let $\pi :M\rightarrow M/G$ be the induced Riemannian submersion.

Suppose $\tilde{\sigma}$ is a horizontal zero-curvature plane in $M$ such
that $\exp _{p}(\tilde{\sigma})$ is a flat. Then

\begin{description}
\item[1] $\tilde{\sigma}$ projects to a zero-curvature plane $\sigma $ in $%
M/G$ that exponentiates to a flat submanifold of $M/G$, and

\item[2] Every horizontal lift of $\sigma $ exponentiates to a horizontal
flat in $M$.
\end{description}
\end{theorem}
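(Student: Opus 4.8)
The plan is to reduce everything to Theorem \ref{t1}. That theorem already shows that once we know the projected plane $\sigma := \pi_\ast(\tilde\sigma)$ is a \emph{zero}-curvature plane in $M/G$, both conclusions follow verbatim: conclusion (1) is exactly conclusion (1) of Theorem \ref{t1}, and conclusion (2) is conclusion (2). Thus the entire new content of Theorem \ref{t2}---the part where the group action must be used, since Examples \ref{FB1} and \ref{FB2} show the statement is false for general submersions---is the assertion that $\pi_\ast(\tilde\sigma)$ has zero curvature. By the Horizontal Curvature Equation together with $\mathrm{sec}_M(\tilde\sigma)=0$, this is equivalent to showing that the integrability tensor satisfies $A_{\tilde x}\tilde y = 0$, where $\{\tilde x,\tilde y\}$ is an orthonormal basis of $\tilde\sigma$.

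First I would record how the $A$-tensor of a submersion induced by a group action sees the Killing fields. Because all orbits are principal, $\pi : M \to M/G$ is a genuine Riemannian submersion whose vertical space at each point is spanned by the fundamental (Killing) fields $\xi^\ast$, $\xi\in\mathfrak g$. Extending $\tilde x,\tilde y$ to horizontal fields $\tilde X,\tilde Y$ and using both that $\langle \tilde Y,\xi^\ast\rangle\equiv 0$ and the skew-symmetry of $\nabla\xi^\ast$, one obtains the standard identity $\langle A_{\tilde x}\tilde y,\xi^\ast\rangle = -\langle \nabla_{\tilde x}\xi^\ast,\tilde y\rangle$. Since the vectors $\xi^\ast_p$ span the vertical space at $p$, it therefore suffices to prove that $\nabla_{\tilde x}\xi^\ast = 0$ for every $\xi\in\mathfrak g$.

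The crux is to prove this vanishing using the flat $F := \exp_p(\tilde\sigma)$, and here both the non-negative curvature and the completeness of $F$ enter. The point is the classical fact that on a non-negatively curved manifold the function $\tfrac12|\xi^\ast|^2$ is convex: for any $X$,
\[ \mathrm{Hess}\bigl(\tfrac12|\xi^\ast|^2\bigr)(X,X) = |\nabla_X\xi^\ast|^2 + \langle R(X,\xi^\ast)\xi^\ast, X\rangle \ge 0, \]
both summands being non-negative. Because $F$ is totally geodesic, its geodesics are geodesics of $M$, so $\tfrac12|\xi^\ast|^2$ restricts to a convex function on $F$; and because a flat is by definition a totally geodesic immersion of $\mathbb R^2$, $F$ is a complete flat plane. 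Since $M$ is compact, $|\xi^\ast|^2$ is bounded, so we have a bounded convex function on $\mathbb R^2$, which must be constant. Hence its Hessian vanishes identically on $F$, forcing both summands above to vanish; in particular $\nabla_X\xi^\ast = 0$ for every $X$ tangent to $F$. Taking $X=\tilde x$ gives $\nabla_{\tilde x}\xi^\ast = 0$, whence $A_{\tilde x}\tilde y = 0$ and $\mathrm{sec}_{M/G}(\sigma)=0$.

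With $\sigma$ now known to be a zero-curvature plane in $M/G$ whose horizontal lift $\tilde\sigma$ satisfies that $\exp_p(\tilde\sigma)$ is a flat, Theorem \ref{t1} applies directly and delivers both conclusions (1) and (2). I expect the middle step to be the main obstacle: one must verify that convexity together with completeness really forces $\nabla\xi^\ast$ to vanish along the \emph{entire} flat. This is where the standing hypotheses are used essentially---compactness of $M$ supplies the boundedness of $|\xi^\ast|^2$, the assumption that $\exp_p(\tilde\sigma)$ is a genuine, complete flat supplies the plane on which a bounded convex function must be constant, and the principal-orbit assumption guarantees both that $\pi$ is an honest Riemannian submersion and that the vertical spaces are spanned by the fundamental Killing fields. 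It is precisely the global flatness that is absent from the merely local flats of Examples \ref{FB1} and \ref{FB2} that makes the argument go through.
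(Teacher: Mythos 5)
Your reduction scheme is sound in outline: once you know $\mathrm{sec}_{M/G}(\sigma )=0$, Theorem \ref{t1} does indeed deliver both conclusions, so the whole content is the vanishing of the $A$-tensor on $\tilde{\sigma}$; and your identity $\langle A_{\tilde{x}}\tilde{y},\xi ^{\ast }\rangle =-\langle \nabla _{\tilde{x}}\xi ^{\ast },\tilde{y}\rangle $ for the fundamental fields $\xi ^{\ast }$ is correct. The fatal problem is the ``classical fact'' at the crux: you have the Bochner sign exactly backwards. For a Killing field $\xi ^{\ast }$ the correct formula is
\[
\mathrm{Hess}\bigl(\tfrac{1}{2}|\xi ^{\ast }|^{2}\bigr)(X,X)=|\nabla _{X}\xi ^{\ast }|^{2}-\langle R(X,\xi ^{\ast })\xi ^{\ast },X\rangle ,
\]
with the curvature term entering \emph{negatively} when the sectional curvature is non-negative. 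It is in \emph{non-positive} curvature that both contributions are non-negative and $|\xi ^{\ast }|^{2}$ is convex (this is how Bochner shows Killing fields on compact non-positively curved manifolds are parallel). In non-negative curvature the two terms compete and convexity simply fails: on the round $S^{2}$ with the rotational Killing field, $\tfrac{1}{2}|\xi ^{\ast }|^{2}=\tfrac{1}{2}\sin ^{2}t$ along a meridian ($t=$ distance to the pole), which is strictly concave near the equator. So you cannot conclude that $|\xi ^{\ast }|^{2}$ is constant on the flat, nor that $\nabla _{\tilde{x}}\xi ^{\ast }=0$, and the argument collapses at its central step. (You were also aiming for more than is needed: only $\langle \nabla _{\tilde{x}}\xi ^{\ast },\tilde{y}\rangle =0$ is required, not the full vanishing of $\nabla _{\tilde{x}}\xi ^{\ast }$.)

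The natural repair keeps your Killing fields but routes them through the paper's Jacobi-field machinery instead of convexity. For a submersion induced by a group action, the holonomy fields along a horizontal geodesic $\gamma $ are precisely the restrictions $\xi ^{\ast }\circ \gamma $: each isometry $\exp (s\xi )$ preserves fibers, hence the horizontal distribution, so the curves $s\mapsto \exp (s\xi )\cdot \gamma $ are exactly the horizontal lifts of $\pi \circ \gamma $, their variation fields are the $\xi ^{\ast }\circ \gamma $, and these exhaust all holonomy fields since a holonomy field is determined by its (vertical) initial value via (\ref{hfd}). Compactness of $M$ then gives $|\xi ^{\ast }\circ \gamma |\leq \max _{M}|\xi ^{\ast }|<\infty $, so every holonomy field is bounded and the Main Lemma (Lemmas \ref{cl} and \ref{Stupid}) yields both conclusions. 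This is in fact close to what the paper does, except that the paper obtains boundedness of holonomy fields by much heavier means: it shows the holonomy group $\mathrm{hol}(b)$ is a compact finite-dimensional Lie group---using $G$-equivariance of holonomy diffeomorphisms, the isomorphism $\mathrm{Diff}_{G}(G/H)\cong N(H)/H$, and a homeomorphism $\mathrm{hol}(b)\cong L_{p}^{\#}\cap \pi ^{-1}(b)$ whose compactness comes from Wilking's dual foliation theorem---and then invokes Tapp's Proposition 6.2. Either way, the proof that actually works passes through bounded holonomy fields feeding the Main Lemma, not through a pointwise vanishing of $\nabla \xi ^{\ast }$ on the flat.
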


Example \ref{Cheeg} shows that this result does not hold if we remove the
hypothesis that $M$ is compact. On the other hand, appropriate associated
bundles also inherit this property.

\begin{corollary}
\label{Associated}Let $G$ be a compact Lie group, $P$ be compact, and $\pi
_{P}:P\rightarrow B\equiv P/G$ a principal $G$--bundle with non-negatively
curved $G$--invariant metric. Let $F$ be a non-negatively curved manifold
that carries an isometric $G$--action and $\pi :E:=P\times _{G}F\rightarrow
B $ the corresponding associated bundle with fiber $F$. Give $E$ and $B$ the
corresponding non-negatively curved metrics so that $\pi $ and $Q:P\times
F\rightarrow P\times _{G}F=E$ become Riemannian submersions.

If $\tilde{\sigma}$ is a $\pi$--horizontal zero-curvature plane in $E$ such
that $\exp _{p}(\tilde{\sigma})$ is a flat, then

\begin{description}
\item[1] $\tilde{\sigma}$ projects to a zero-curvature plane $\sigma $ in $B$
that exponentiates to a flat submanifold of $B$, and

\item[2] Every horizontal lift of $\sigma $ exponentiates to a horizontal
flat in $E$.
\end{description}
\end{corollary}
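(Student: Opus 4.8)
The plan is to reduce $\pi\colon E\to B$---which is not itself a quotient by a group action---to the honest principal bundle $\pi_{P}\colon P\to B$, to which Theorem \ref{t2} applies, using the commuting square of Riemannian submersions $\pi\circ Q=\pi_{P}\circ\mathrm{pr}_{P}$, where $\mathrm{pr}_{P}\colon P\times F\to P$ is the projection. With the product metric on $P\times F$, the fibers $\{p\}\times F$ of $\mathrm{pr}_{P}$ are totally geodesic, so Theorem \ref{tgft} applies to $\mathrm{pr}_{P}$; and $\pi_{P}$ is induced by the free (hence principal) isometric $G$--action on the compact, non-negatively curved $P$, so Theorem \ref{t2} applies to $\pi_{P}$. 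A direct computation of vertical spaces shows that the horizontal space of the composite $\pi_{P}\circ\mathrm{pr}_{P}$ at $(p,f)$ is $\mathcal{H}^{\pi_{P}}_{p}\oplus\{0\}$; hence the $Q$--horizontal lift of any $\pi$--horizontal plane in $E$ lands in the $P$--factor and is $\pi_{P}$--horizontal there.

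First I would lift the given flat through $Q$. Writing $\hat\sigma$ for the $Q$--horizontal lift of $\tilde\sigma$, with orthonormal basis $\{\hat x,\hat y\}$, O'Neill's horizontal curvature equation for $Q$ gives $0=\mathrm{sec}_{E}(\tilde\sigma)=\mathrm{sec}_{P\times F}(\hat\sigma)+3|A^{Q}_{\hat x}\hat y|^{2}$, so both $\mathrm{sec}_{P\times F}(\hat\sigma)=0$ and $A^{Q}_{\hat x}\hat y=0$. The decisive claim is that $\exp(\hat\sigma)$ is again a flat in $P\times F$: its geodesics are the $Q$--horizontal lifts of the geodesics sweeping out $\exp_{p}(\tilde\sigma)$, and these are geodesics because horizontal lifts of geodesics are geodesics; applying the curvature equation at each tangent plane of this lifted surface should force both the curvature and $A^{Q}$ to vanish along it, making it intrinsically flat and, because $A^{Q}$ vanishes on it, a totally geodesic integral surface of the horizontal distribution.

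Once $\exp(\hat\sigma)$ is known to be a flat, the rest is routine descent. Pushing it down by $\mathrm{pr}_{P}$ through Theorem \ref{tgft} yields a $\pi_{P}$--horizontal zero-curvature plane $\bar\sigma=(\mathrm{pr}_{P})_{\ast}\hat\sigma$ in $P$ with $\exp(\bar\sigma)$ a flat, and then Theorem \ref{t2} applied to $\pi_{P}$ gives conclusion 1: $\sigma:=(\pi_{P})_{\ast}\bar\sigma=\pi_{\ast}\tilde\sigma$ is a zero-curvature plane of $B$ that exponentiates to a flat, together with the fact that every $\pi_{P}$--horizontal lift of $\sigma$ exponentiates to a horizontal flat in $P$. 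For conclusion 2, a $\pi$--horizontal lift $\tilde\sigma'$ of $\sigma$ at $p'\in E$ corresponds, by the same vertical-space computation, to a $\pi_{P}$--horizontal lift $\bar\sigma'$ of $\sigma$ in $P$; its exponential is a horizontal flat in $P$, so the totally geodesic slice $\exp(\bar\sigma')\times\{f'\}$ is a $Q$--horizontal flat in $P\times F$ whose image under the Riemannian submersion $Q$ is $\exp_{p'}(\tilde\sigma')$, a horizontal flat in $E$.

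The hard part will be the lifting step of the second paragraph: producing the flat $\exp(\hat\sigma)$ upstairs from the flat $\exp_{p}(\tilde\sigma)$ downstairs. This is exactly the delicate \emph{ascending} direction that Problem 1 concerns, and none of Theorems \ref{t1}, \ref{t2}, or \ref{tgft} supplies it for free, since each of them only descends flats. I expect it to come from the non-negative curvature rigidity behind O'Neill's equation---the vanishing of $A^{Q}$ propagating along the lifted surface so that it cannot bend away from horizontality---which is the phenomenon isolated in Lemma 1.5 of \cite{StraWals}. I would also record the mild point that the reduction uses $P$ compact for Theorem \ref{t2} but only completeness of $P\times F$ for the lift, so that the non-compactness counterexample of Example \ref{Cheeg} is genuinely excluded.
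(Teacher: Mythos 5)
Your proposal is correct in outline and reaches both conclusions by a genuinely different descent than the paper's, but the step you flag as ``the hard part'' is in fact the routine, already-available one. The paper's proof uses the same commuting square $\pi \circ Q=\pi _{P}\circ \pi _{1}$ (your $\mathrm{pr}_{P}$ is its $\pi _{1}$) and the same first move: lift the flat through $Q$. That ascent is exactly Lemma \ref{Stupid}, applied with $Q:P\times F\rightarrow E$ in the role of the submersion and $\tilde{\sigma}$ as the plane downstairs; your sketch---O'Neill's equation forcing $\mathrm{sec}=0$ and $A^{Q}=0$ on every lifted tangent plane, hence an integrable horizontal $2$-plane distribution with totally geodesic leaves over the flat---is precisely the proof of that lemma, so your hedged ``decisive claim'' closes immediately. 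Relatedly, your last paragraph has the difficulty reversed: because the Horizontal Curvature Equation splits $\mathrm{sec}_{B}$ into two non-negative terms, \emph{lifting} flats is the easy direction; Problem 1 concerns the delicate \emph{descending} direction, which is where bounded holonomy enters. Where you genuinely diverge is in that descent: the paper descends in one step through the composite $\pi \circ Q=\pi _{P}\circ \pi _{1}$, noting that its holonomy fields are products of those of $\pi _{P}$ (bounded by the proof of Theorem \ref{t2}) and of $\pi _{1}$ (bounded since its fibers are totally geodesic), then invokes Lemma \ref{cl} for conclusion 1 and Lemma \ref{Stupid} applied to $\pi :E\rightarrow B$ for conclusion 2. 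You instead descend in two stages, via Theorem \ref{tgft} for $\mathrm{pr}_{P}$ and then Theorem \ref{t2} for $\pi _{P}$, justified by your correct computation that $\mathcal{H}_{(p,f)}^{\pi \circ Q}=\mathcal{H}_{p}^{\pi _{P}}\oplus \{0\}$, so the lifted plane is tangent to the totally geodesic slice $P\times \{f\}$ and $\pi _{P}$-horizontal there (making the Theorem \ref{tgft} stage almost tautological, since $\mathrm{pr}_{P}$ restricted to the slice is an isometry). Your route buys independence from the paper's unproved assertion that holonomy fields of a composition are products of holonomy fields, using only the stated theorems as black boxes; the paper's route buys a single uniform descent through the Main Lemma and a one-line conclusion 2, whereas you transport horizontal lifts of $\sigma $ back through the square by hand---which your vertical-space identification does legitimately, though citing Lemma \ref{Stupid} for $\pi :E\rightarrow B$ directly is shorter and is what the paper does.
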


There is an abstract application of Theorem \ref{t1} in \cite{PetWilh1}. It
allows for a simplification of one of the axioms for the Orthogonal Partial
Conformal Change. There are also quite a few concrete examples of our
results in the literature that are not examples of Theorem \ref{tappt}.

\begin{example}
Grove and Ziller have shown how to lift the product metric on $S^{2}\times
S^{2}$ and Cheeger's metric on $\mathbb{C}P^{2}\#-\mathbb{C}P^{2}$ to
various principal $SO\left( k\right) $ bundles and hence to all of the
associated bundles \cite{GrovZil}. According to Lemma \ref{Stupid} (below)
the flat tori in $S^{2}\times S^{2}$ lift to flats in all of these
non-negatively curved bundles. Similarly, the flat Klein bottles in
Cheeger's $\mathbb{C}P^{2}\#-\mathbb{C}P^{2}$ must also lift to flats in all
of the non-negatively curved bundles of \cite{GrovZil}. It follows from the
construction of the metric that the principal bundles all have totally
geodesic fibers. Therefore the principal bundles give examples of Theorems %
\ref{t1}, \ref{tgft}, and \ref{t2}. The associated bundles give examples of
Theorem \ref{t1} and Corollary \ref{Associated}.
\end{example}

To prove Theorems \ref{tgft} and \ref{t2} we establish a main lemma on
holonomy fields, whose defintion we recall from \cite{GromWals}.

\begin{definition}
Given a Riemannian submersion $\pi :M\rightarrow B$ let $A$ and $T$ be the
corresponding fundamental tensors as defined in \cite{O'Neill}. A Jacobi
field $J$ along a horizontal geodesic $c:I\rightarrow M$ is said to be a
holonomy field if $J(0)$ is vertical and satisfies 
\begin{equation}
J^{\prime }(0)=A_{\dot{c}(0)}J(0)+T_{J(0)}\dot{c}(0).  \label{hfd}
\end{equation}
\end{definition}

\begin{mainlemma}
Let $\pi :M\rightarrow B$ be a Riemannian submersion of complete,
non-negatively curved manifolds so that each holonomy field is bounded. Let $%
\tilde{\sigma}$ be a horizontal zero-curvature plane in $M$ such that $\exp (%
\tilde{\sigma})$ is a flat. Then

\begin{description}
\item[1] $\tilde{\sigma}$ projects to a zero-curvature plane $\sigma $ in $B$
that exponentiates to a flat submanifold of $B$, and

\item[2] Every horizontal lift of $\sigma $ exponentiates to a horizontal
flat in $M$.
\end{description}
\end{mainlemma}

The Main Lemma is a consequence of Lemmas \ref{cl} and \ref{Stupid} (below).
These along with Theorems \ref{t1} and \ref{tgft} are proven in Section 1.
In Section 2, we prove Theorem \ref{t2} by showing that such submersions
satisfy the hypotheses of the main lemma. Corollary \ref{Associated} is also
proven in Section 2.

\noindent \textbf{Acknowledgement: }\emph{We are greatful to Owen Dearricott
for asking if we could prove Theorem \ref{tgft}.}

\section{Jacobi Fields Along Geodesics Contained In Flats}

The symmetries of the curvature tensor imply that the map $X\longmapsto
R\left( X,W\right) W$ is self-adjoint. This combined with the spectral
theorem yields the following result, which appears implictly in \cite%
{PetWilh2}.

\begin{proposition}
\label{R-0}Let $\mathrm{span}\left\{ X,W\right\} $ be a zero curvature plane
in a nonnnegatively curved manifold, then 
\begin{equation*}
R\left( X,W\right) W=R\left( W,X\right) X=0.
\end{equation*}
\end{proposition}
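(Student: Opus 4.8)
The plan is to fix the vector $W$ and study the single linear map $S\colon X\mapsto R(X,W)W$ on the tangent space $T_pM$. As the text preceding the statement already observes, the antisymmetries of $R$ together with the pair symmetry $R(a,b,c,d)=R(c,d,a,b)$ give $\langle R(X,W)W,Y\rangle=\langle R(Y,W)W,X\rangle$ for all $X,Y$, so $S$ is self-adjoint. Its associated quadratic form is $\langle SX,X\rangle=\langle R(X,W)W,X\rangle$, which differs from the sectional curvature of $\mathrm{span}\{X,W\}$ only by the positive factor $|X\wedge W|^{2}$. Since the ambient manifold satisfies $\mathrm{sec}\ge 0$, this form is nonnegative on all of $T_pM$; that is, $S$ is positive semidefinite.

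Next I would bring in the zero-curvature hypothesis. Because $\mathrm{sec}(\mathrm{span}\{X,W\})=0$, the quadratic form vanishes on the chosen vector, $\langle SX,X\rangle=0$. The essential step is then the elementary fact that a positive semidefinite self-adjoint operator kills every vector on which its quadratic form vanishes. I would deduce this from the Cauchy--Schwarz inequality for the nonnegative symmetric bilinear form $(U,V)\mapsto\langle SU,V\rangle$: for every $Y$ one has $\langle SX,Y\rangle^{2}\le\langle SX,X\rangle\,\langle SY,Y\rangle=0$, so $SX=0$, i.e. $R(X,W)W=0$. (Equivalently, diagonalize $S$ by the spectral theorem; nonnegativity forces all eigenvalues to be $\ge 0$, and $\langle SX,X\rangle=0$ then forces $X$ into $\ker S$.) Interchanging the roles of $X$ and $W$—the hypothesis is symmetric in the two spanning vectors—and repeating the argument with $X'\mapsto R(X',X)X$ yields $R(W,X)X=0$.

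There is no serious obstacle in this argument; its entire content is the passage from vanishing of the quadratic form to vanishing of the operator, which is exactly where self-adjointness and the sign condition $\mathrm{sec}\ge 0$ enter together. The only point worth flagging is that orthonormality of $\{X,W\}$ is irrelevant: the factor $|X\wedge W|^{2}$ is positive for any basis of a genuine plane, so both the semidefiniteness of $S$ and the vanishing $\langle SX,X\rangle=0$ persist without any normalization.
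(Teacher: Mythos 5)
Your argument is correct and is essentially the paper's own: the paper likewise observes that the curvature symmetries make $X\mapsto R(X,W)W$ self-adjoint and then invokes the spectral theorem (equivalently, your Cauchy--Schwarz step for the semidefinite form) to pass from the vanishing of the quadratic form on the zero-curvature plane to the vanishing of the operator on $X$, with the roles of $X$ and $W$ interchanged for the second identity. Your write-up just makes explicit the positive-semidefiniteness and kernel argument that the paper leaves implicit.
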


In a compact Lie group $G$ with bi-invariant metric, solutions to the Jacobi
equation along a geodesic $\gamma (t)$ have the form 
\begin{equation*}
J(t)=E_{0}+tF_{0}+\sum_{i=0}^{l}\left( \cos ({\sqrt{k_{i}}t})E_{i}+\sin (%
\sqrt{k_{i}}r)F_{i}\right) ,
\end{equation*}%
where $E_{i}$ and $F_{i}$ are parallel along $\gamma $ (see \cite{Munt}). We
generalize this decomposition in the following way:

\begin{lemma}
\label{jfl} Suppose $\gamma $ is a geodesic in a complete, non-negatively
curved manifold $M$, and suppose $J_{0}$ is a normal, parallel, Jacobi field
along $\gamma $, then any normal Jacobi field $J$ along $\gamma $ can be
written as 
\begin{equation}
J(t)=(a+bt)J_{0}(t)+W(t),  \label{jac}
\end{equation}%
where $a,b\in \mathbb{R}$ and $W$ and $W^{\prime }$ are perpendicular to $%
J_{0}$.
\end{lemma}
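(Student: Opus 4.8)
The plan is to reduce the whole statement to an elementary fact about the scalar function $f(t)=\langle J(t),J_0(t)\rangle$. First I would exploit the hypothesis that $J_0$ is \emph{parallel}: this means $J_0'\equiv 0$, so the Jacobi equation $J_0''+R(J_0,\dot\gamma)\dot\gamma=0$ collapses to $R(J_0,\dot\gamma)\dot\gamma=0$. In particular $\mathrm{span}\{J_0,\dot\gamma\}$ is a zero-curvature plane, and $|J_0|^2$ is a positive constant (the case $J_0\equiv 0$ being vacuous), so the coefficients I am about to write down will be well defined.

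Next I would differentiate $f$ twice along $\gamma$. Since $J_0'=0$, I get $f'=\langle J',J_0\rangle$ and $f''=\langle J'',J_0\rangle$. Substituting the Jacobi equation $J''=-R(J,\dot\gamma)\dot\gamma$ gives $f''=-\langle R(J,\dot\gamma)\dot\gamma,J_0\rangle$. Here is where the one nontrivial input enters: by the self-adjointness of the map $X\longmapsto R(X,\dot\gamma)\dot\gamma$ (the symmetry of the curvature tensor recorded just before Proposition \ref{R-0}), one has $\langle R(J,\dot\gamma)\dot\gamma,J_0\rangle=\langle R(J_0,\dot\gamma)\dot\gamma,J\rangle=0$, using the vanishing from the previous step. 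Hence $f''\equiv 0$, so $f$ is affine, say $f(t)=\alpha+\beta t$ for constants $\alpha,\beta$.

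Then I would set $a=\alpha/|J_0|^2$ and $b=\beta/|J_0|^2$ and define $W(t)=J(t)-(a+bt)J_0(t)$. Because $J_0$ is parallel, $(a+bt)J_0$ has vanishing second covariant derivative and satisfies $R((a+bt)J_0,\dot\gamma)\dot\gamma=0$, so it is itself a Jacobi field; thus $W$ is automatically a normal Jacobi field. The two orthogonality conditions then fall out by direct computation: $\langle W,J_0\rangle=f(t)-(a+bt)|J_0|^2=0$, and since $W'=J'-bJ_0$ (again using $J_0'=0$), $\langle W',J_0\rangle=f'(t)-b|J_0|^2=\beta-\beta=0$. This yields the asserted decomposition \eqref{jac}.

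There is no serious obstacle: the argument is a short ODE/algebra computation, and the only place requiring care is the curvature-symmetry step, which is precisely where the hypothesis that $J_0$ is a \emph{parallel} Jacobi field (equivalently, that it spans a zero-curvature plane with $\dot\gamma$) is used. I note that neither completeness nor non-negative curvature of $M$ is actually needed for this particular decomposition; those hypotheses matter only for the applications, where parallel normal Jacobi fields arise from the flat $\exp(\tilde\sigma)$.
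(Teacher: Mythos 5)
Your proof is correct, and it takes a route that is genuinely, if modestly, different from the paper's in two respects. First, the source of the key vanishing $R(J_0,\gamma')\gamma'=0$: the paper notes that $J_0$ and $\gamma'$ span a zero-curvature plane and then invokes Proposition \ref{R-0}, which rests on the spectral theorem and the hypothesis that $M$ is non-negatively curved; you instead read the vanishing directly off the Jacobi equation $J_0''+R(J_0,\gamma')\gamma'=0$ combined with $J_0''=0$ from parallelism. This is shorter and strictly more general---as you observe, it shows the lemma requires neither completeness nor non-negative curvature, so those hypotheses are only needed downstream, where parallel Jacobi fields are produced from flats. Second, the bookkeeping: the paper extends $J_0$ to a parallel orthonormal frame $\{J_0,E_2,\dots,E_{n-1}\}$, expands $J$ in that frame, and shows the $J_0$-coefficient satisfies $f''=0$, with $W'\perp J_0$ falling out of the frame expansion $W'=\sum f_i'E_i$; you dispense with the frame and work only with the scalar $f=\langle J,J_0\rangle$, proving $f''=0$ by the very same curvature-symmetry computation and then defining $W$ as the remainder. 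The analytic heart---self-adjointness of $X\mapsto R(X,\gamma')\gamma'$ together with $R(J_0,\gamma')\gamma'=0$ forcing affine behavior of the $J_0$-component---is identical in both arguments, so what your version buys is economy and generality rather than a new idea. Your handling of the degenerate case is also right: a parallel field has constant norm, so $J_0$ is either identically zero (vacuous) or nowhere zero, which legitimizes the division by $|J_0|^2$.
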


\begin{proof}
Extend $J_{0}$ to an orthonormal basis $\{J_{0},E_{2},...,E_{n-1}\}$ of
normal, parallel fields along $\gamma $. Since $J_{0}(t)$ and $\gamma
^{\prime }(t)$ span a zero-curvature plane and $M$ is non-negatively curved, 
$R(J_{0},\gamma ^{\prime })\gamma ^{\prime }=0$, by Proposition \ref{R-0}.
Therefore, if we write 
\begin{equation*}
J(t)=f(t)J_{0}(t)+\sum_{i=2}^{n-1}f_{i}(t)E_{i}(t),
\end{equation*}%
we have 
\begin{eqnarray*}
J^{\prime \prime }(t) &=&-R(J(t),\gamma ^{\prime }(t))\gamma ^{\prime }(t) \\
&=&-\sum_{i=2}^{n-1}f_{i}(t)R(E_{i},\gamma ^{\prime }(t))\gamma ^{\prime }(t)
\end{eqnarray*}%
and 
\begin{equation*}
\langle R(E_{i},\gamma ^{\prime })\gamma ^{\prime },J_{0}\rangle =\langle
R(J_{0},\gamma ^{\prime })\gamma ^{\prime },E_{i}\rangle =0
\end{equation*}%
by a symmetry of the curvature tensor. Thus $J^{\prime \prime }\perp J_{0}$.
Since $\{J_{0},E_{2},...,E_{n-1}\}$ is parallel and orthogonal, we also have 
\begin{equation*}
J^{\prime \prime }(t)=f^{\prime \prime
}(t)J_{0}(t)+\sum_{i=2}^{n-1}f_{i}^{\prime \prime }(t)E_{i}(t).
\end{equation*}%
Combining this with $J^{\prime \prime }\perp J_{0},$ we see that $f^{\prime
\prime }=0$ as claimed.

Since $W^{\prime }=\sum_{i=2}^{n-1}f_{i}^{\prime }(t)E_{i}(t),$ we also have 
$W^{\prime }\perp J_{0}.$
\end{proof}

Given a Riemannian submersion $\pi :M\rightarrow B$ , let $\mathcal{V}$ and $%
\mathcal{H}$ be the vertical and horizontal distributions. As holonomy
fields are the variational fields arising from horizontal lifts of geodesics
in $B$, they never vanish, they remain vertical, and they satisfy (\ref{hfd}%
) for all time. In fact, we can find a collection $\{J_{i}(t)\}$ of such
fields that span $\mathcal{V}$ along $c$. This description of $\mathcal{V}$
allows one to determine precisely when a field along a curve in $M$ has
values in $\mathcal{H}$. In particular, we have the following, as observed
by Tapp when $M$ is a Lie group.

\begin{lemma}
\label{bjfl} Suppose $\pi :M\rightarrow B$ is a Riemannian submersion of a
complete, non-negatively curved manifold $M$, $\gamma $ is a horizontal
geodesic in $M$, and $J_{0}$ is a parallel Jacobi field along $\gamma $ such
that $J_{0}(0)$ is horizontal. If all holonomy fields $V$ along $\gamma $
have bounded length, then $J_{0}$ is everywhere horizontal.
\end{lemma}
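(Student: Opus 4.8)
The plan is to exploit the remark recorded just above the statement: the holonomy fields span the vertical distribution $\mathcal{V}$ along $\gamma$. Consequently $J_0(t)$ is horizontal for every $t$ exactly when $\langle J_0(t), V(t)\rangle = 0$ for every holonomy field $V$ and every $t$. So I would fix an arbitrary holonomy field $V$ along $\gamma$, introduce the scalar function $f(t) := \langle J_0(t), V(t)\rangle$, and reduce the whole lemma to showing $f \equiv 0$; the conclusion then follows by letting $V$ range over a spanning family of holonomy fields.

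First I would show that $f$ is an affine function of $t$. Both $J_0$ and $V$ are Jacobi fields along $\gamma$, so their Wronskian $w := \langle J_0', V\rangle - \langle J_0, V'\rangle$ is constant: differentiating and inserting the Jacobi equation $J'' = -R(J,\gamma')\gamma'$ for each field makes the two curvature terms cancel via the symmetry $\langle R(J_0,\gamma')\gamma', V\rangle = \langle R(V,\gamma')\gamma', J_0\rangle$. Because $J_0$ is parallel we have $J_0' \equiv 0$, so $f'(t) = \langle J_0, V'\rangle = -w$ is constant and $f$ is affine. Evaluating at $t=0$ gives $f(0) = \langle J_0(0), V(0)\rangle = 0$, since $J_0(0)$ is horizontal while $V(0)$ is vertical. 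Hence $f(t) = -w\,t$ is linear through the origin.

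The decisive step — and the only place the hypothesis enters — is to eliminate the slope using boundedness. Since $J_0$ is parallel, $|J_0(t)| \equiv |J_0(0)|$, so Cauchy--Schwarz gives $|f(t)| \le |J_0(0)|\,|V(t)|$. As $M$ is complete, $\gamma$ and therefore $V$ are defined on all of $\mathbb{R}$, and by hypothesis $|V|$ is bounded there; a linear function that is bounded on $\mathbb{R}$ must vanish, so $w=0$ and $f \equiv 0$. Running this over a spanning set of holonomy fields shows $J_0(t) \perp \mathcal{V}$ for all $t$, i.e.\ $J_0$ is everywhere horizontal.

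I expect the main obstacle to be conceptual rather than computational: the point is to recognize that the Wronskian identity forces $f$ to be affine with no real work, so that the entire content of the lemma is concentrated in the boundedness assumption, which is precisely what kills the linear term $-w\,t$. The remaining verifications are short and standard. This hypothesis is genuinely necessary: in the non-compact Example \ref{Cheeg} the total space is flat, so the holonomy fields grow linearly and are unbounded, and the same computation then produces a parallel, initially horizontal Jacobi field with $w \neq 0$ that drifts out of the horizontal distribution — exactly the behavior the lemma rules out under bounded holonomy.
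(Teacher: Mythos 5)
Your proof is correct, and it shares the paper's overall skeleton---show that the component of a holonomy field $V$ along $J_{0}$ is affine in $t$, kill the constant term with the initial condition, kill the slope with boundedness, and finish with the fact that holonomy fields span the vertical distribution along $\gamma$---but you reach the key affineness by a genuinely different route. The paper simply invokes its Lemma \ref{jfl}: every normal Jacobi field decomposes as $(a+bt)J_{0}+W$ with $W,W^{\prime }\perp J_{0}$, and that lemma's proof chooses a parallel orthonormal frame and appeals to Proposition \ref{R-0} (hence to non-negative curvature) to obtain $R(J_{0},\gamma ^{\prime })\gamma ^{\prime }=0$. You instead observe that the Wronskian $\langle J_{0}^{\prime },V\rangle -\langle J_{0},V^{\prime }\rangle $ of two Jacobi fields is constant, which together with $J_{0}^{\prime }\equiv 0$ makes $f(t)=\langle J_{0}(t),V(t)\rangle $ affine directly; this is more elementary (no frame, no appeal to Proposition \ref{R-0}) and strictly more general, since the Wronskian identity holds in any Riemannian manifold, so your argument exposes that the curvature sign is not actually needed for this particular lemma. (In fairness, the paper's route does not truly need it either: a parallel Jacobi field satisfies $0=J_{0}^{\prime \prime }=-R(J_{0},\gamma ^{\prime })\gamma ^{\prime }$ outright, so the invocation of Proposition \ref{R-0} there is a matter of framing.) What the paper's packaging buys is reusability: the explicit decomposition $V(t)=btJ_{0}(t)+W(t)$ from Lemma \ref{jfl} is used again, verbatim, in the proof of Theorem \ref{t1}, so factoring the computation into a separate lemma avoids repeating it. Your closing remark about Example \ref{Cheeg} is not needed for the proof, but it is consistent with the paper: there the base is positively curved while the total space is flat, so by your own lemma the holonomy fields must be unbounded.
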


\begin{proof}
Let $V$ be a holonomy field. Since $V$ is always vertical, the decomposition
in Lemma~\ref{jfl} simplifies to 
\begin{equation*}
V(t)=btJ_{0}(t)+W(t).
\end{equation*}%
Since $V$ has bounded length, $b=0$ and therefore $V(t)=W(t),$ which is
perpendicular to $J_{0}$. As the collection of all holonomy fields spans the
vertical distribution along $\gamma $, the result follows.
\end{proof}

Part 1 of the main lemma is a consequence of the next result.

\begin{lemma}
\label{cl} Suppose $\pi :M\rightarrow B$ is a Riemannian submersion of a
complete, non-negatively curved manifold $M$, and all holonomy fields of $%
\pi $ have bounded length. Suppose $\tilde{\sigma}$ is a horizontal zero
curvature plane and $\exp \left( \tilde{\sigma}\right) $ is a totally
geodesic flat.

Then $\sigma :=d\pi \left( \tilde{\sigma}\right) $ has a zero curvature and $%
\exp (\sigma )$ is a totally geodesic flat submanifold of $B$.
\end{lemma}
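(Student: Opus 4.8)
The plan is to show that the \emph{entire} flat $\exp(\tilde{\sigma})$ is a horizontal submanifold of $M$---i.e.\ that all of its tangent spaces lie in $\mathcal{H}$---and then to observe that a horizontal, totally geodesic flat necessarily projects to a totally geodesic flat in $B$. To set this up I would parametrize the flat by $F(u,v)=\exp_{p}(u\tilde{x}+v\tilde{y})$, where $\{\tilde{x},\tilde{y}\}$ is an orthonormal basis of $\tilde{\sigma}$. Since $\exp(\tilde{\sigma})$ is intrinsically flat and totally geodesic, $\exp_{p}|_{\tilde{\sigma}}$ maps straight lines to geodesics and is a local isometry onto the flat; consequently every $u$-curve and every $v$-curve of $F$ is a geodesic of $M$, and the coordinate fields $\partial_{u},\partial_{v}$ are parallel along them.

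The crux of the argument---and the place where the bounded-holonomy hypothesis enters, through Lemma~\ref{bjfl}---is to prove that $\partial_{u}$ and $\partial_{v}$ are everywhere horizontal. The curve $\gamma_{0}:=F(\,\cdot\,,0)$ is a geodesic with horizontal initial velocity $\tilde{x}$, hence remains horizontal for all time (a geodesic with horizontal initial velocity stays horizontal, a standard property of Riemannian submersions). Along $\gamma_{0}$ the field $\partial_{v}$ is a parallel Jacobi field with $\partial_{v}(0)=\tilde{y}\in\mathcal{H}$, so Lemma~\ref{bjfl} forces $\partial_{v}$ to be horizontal all along $\gamma_{0}$. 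Therefore each transverse $v$-curve $\gamma^{u}:=F(u,\,\cdot\,)$ is a geodesic whose initial velocity $\partial_{v}|_{\gamma_{0}(u)}$ is horizontal, so each $\gamma^{u}$ is horizontal for all time; this already makes $\partial_{v}$ horizontal on all of the flat. Applying Lemma~\ref{bjfl} once more along each $\gamma^{u}$ to the parallel Jacobi field $\partial_{u}$---which is horizontal at $v=0$, where it equals $\dot{\gamma}_{0}$---shows that $\partial_{u}$ is horizontal as well. Hence the flat is horizontal. I expect this horizontality cascade to be the main obstacle: one must correctly recognize $\partial_{u},\partial_{v}$ as parallel Jacobi fields along the appropriate geodesics and invoke Lemma~\ref{bjfl} in \emph{both} coordinate directions, using the persistence of horizontality to pass from $\gamma_{0}$ to the transverse family $\{\gamma^{u}\}$.

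Once the flat is horizontal the conclusions follow quickly. Because $d\pi$ is a linear isometry on $\mathcal{H}$ and the flat is horizontal, $\pi|_{\exp(\tilde{\sigma})}$ is an isometric immersion onto $\exp(\sigma)$, which is therefore intrinsically flat. Its geodesics are images of geodesics of the flat, i.e.\ images of horizontal geodesics of $M$, and horizontal geodesics project to geodesics of $B$; hence $\exp(\sigma)$ is totally geodesic. Being totally geodesic and intrinsically flat, $\exp(\sigma)$ is a flat submanifold and its tangent plane $\sigma=d\pi(\tilde{\sigma})$ satisfies $\mathrm{sec}_{B}(\sigma)=0$. Equivalently, horizontality and parallelism give $A_{\tilde{x}}\tilde{y}=\mathcal{V}\nabla_{\partial_{u}}\partial_{v}=0$, so the Horizontal Curvature Equation yields $\mathrm{sec}_{B}(x,y)=3\,|A_{\tilde{x}}\tilde{y}|^{2}=0$ directly, which is the statement that $\sigma$ is a zero curvature plane.
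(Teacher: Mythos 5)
Your proposal is correct and takes essentially the same route as the paper: the heart of both arguments is applying Lemma \ref{bjfl} to parallel Jacobi fields along horizontal geodesics to conclude that $\exp(\tilde{\sigma})$ is everywhere horizontal, and then projecting, using that horizontal geodesics map to geodesics and that the $A$--tensor vanishes so the Horizontal Curvature Equation gives $\mathrm{sec}_{B}(\sigma)=0$. The only cosmetic difference is in how the flat is covered: the paper lets the orthonormal pair $\{X,Y\}$ range over all of $\tilde{\sigma}$ and applies Lemma \ref{bjfl} along every radial geodesic $t\mapsto\exp(tX)$, whereas you fix one coordinate grid $F(u,v)$ and run a two-step cascade (first along $\gamma_{0}$, then along the transverse family $\gamma^{u}$); both correctly yield horizontality of every tangent plane to the flat.
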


\begin{proof}
Let $\left\{ X,Y\right\} $ be any orthonormal pair in $\tilde{\sigma}.$ Let $%
\gamma $ be the geodesic$:t\longmapsto \exp \left( tX\right) ,$ and let $J$
be the parallel Jacobi field along $\gamma $ with $J\left( 0\right) =Y.$
Then by the previous Lemma, $J\left( t\right) $ is horizontal for all $t.$
Hence $\exp \left( \tilde{\sigma}\right) $ is everywhere horizontal, and, by
assumption, a totally geodesic flat.

It follows from the Horizontal Curvature Equation that $\pi (\exp \left( 
\tilde{\sigma}\right) )$ is also flat, and from the formula for covariant
derivatives of horizontal fields it follows that $\pi (\exp \left( \tilde{%
\sigma}\right) )$ is totally geodesic. Since horizontal geodesics project to
geodesics, $\pi (\exp \left( \tilde{\sigma}\right) )=\exp (d\pi \left( 
\tilde{\sigma}\right) )=\exp (\sigma ).$ So $\exp (\sigma )$ is a totally
geodesic flat submanifold of $B$.
\end{proof}

The following lemma is probably a well known application of the Horizontal
Curvature Equation. We include it as it establishes part 2 of our main lemma
and is also used in the proof of Theorem \ref{t1}.

\begin{lemma}
\label{Stupid}Let $\pi :M\rightarrow B$ be a Riemannian submersion of a
complete, non-negatively curved manifold $M.$ Let $\sigma $ be a tangent
plane to $B$ so that $\exp (\sigma )$ is a totally geodesic flat.

Then for any horizontal lift $\tilde{\sigma}$ of $\sigma ,$ $\exp (\tilde{%
\sigma})$ is a totally geodesic flat that is everywhere horizontal.
\end{lemma}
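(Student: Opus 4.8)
The plan is to construct the lift of the flat directly and to recognize the $A$--tensor as the precise obstruction to horizontality. Since the statement concerns $\exp_{p}(\tilde{\sigma})$ it is local, so I may assume $F:=\exp(\sigma)$ is an embedded totally geodesic flat through $b:=\pi(p)$. First I would record two pointwise facts along $F$. Because $F$ is flat and totally geodesic, the Gauss equation gives $\mathrm{sec}_{B}(T_{b'}F)=0$ for every $b'\in F$, so every tangent plane of $F$ is a zero-curvature plane of $B$. Feeding each such plane, together with a horizontal lift, into the Horizontal Curvature Equation and using $\mathrm{sec}_{M}\geq 0$ forces both $\mathrm{sec}_{M}=0$ and $A\equiv 0$ on the horizontal lift of $T_{b'}F$, at \emph{every} point of $\pi^{-1}(F)$.

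Next I would build the candidate surface. On $\pi^{-1}(F)$ let $D:=\mathcal{H}\cap (d\pi)^{-1}(TF)$ be the horizontal lift of $TF$; this is a smooth rank-two distribution with $d\pi:D\to TF$ a fiberwise isometry and $D_{p}=\tilde{\sigma}$. The key point---and the step I expect to be the real obstacle---is horizontality, i.e. producing an integral surface of $D$. This is exactly where the vanishing of $A$ enters: for horizontal lifts $\tilde{X},\tilde{Y}$ of vector fields tangent to $F$ one has $[\tilde{X},\tilde{Y}]^{\mathcal{V}}=2A_{\tilde{X}}\tilde{Y}=0$, while $[\tilde{X},\tilde{Y}]^{\mathcal{H}}$ covers $[X,Y]\in TF$ and so again lies in $D$. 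Thus $D$ is involutive, and the Frobenius theorem produces an integral surface $\tilde{F}$ through $p$ with $T_{p}\tilde{F}=\tilde{\sigma}$. By construction $T\tilde{F}\subset \mathcal{H}$, so $\tilde{F}$ is everywhere horizontal; since $\pi$ is a Riemannian submersion, $\pi|_{\tilde{F}}:\tilde{F}\to F$ is then a local isometry, and flatness of $F$ gives flatness of $\tilde{F}$.

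Finally I would verify that $\tilde{F}$ is totally geodesic and identify it with $\exp_{p}(\tilde{\sigma})$. For $\tilde{X},\tilde{Y}$ tangent to $\tilde{F}$, O'Neill's formula for covariant derivatives of horizontal fields gives $\nabla^{M}_{\tilde{X}}\tilde{Y}=\widetilde{\nabla^{B}_{X}Y}+A_{\tilde{X}}\tilde{Y}$. The vertical term vanishes by the first step, and since $F$ is totally geodesic in $B$ we have $\nabla^{B}_{X}Y\in TF$, whose horizontal lift lies in $D=T\tilde{F}$; hence $\nabla^{M}_{\tilde{X}}\tilde{Y}$ is tangent to $\tilde{F}$ and $\tilde{F}$ is totally geodesic. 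Being totally geodesic and tangent to $\tilde{\sigma}$ at $p$, $\tilde{F}$ contains every $M$--geodesic issuing from $p$ in a direction of $\tilde{\sigma}$, so $\exp_{p}(\tilde{\sigma})=\tilde{F}$ near $p$, which is the desired totally geodesic, flat, everywhere horizontal submanifold. (An alternative in the spirit of this section would be to take $\tilde{J}$ to be the parallel transport of a unit vector of $\tilde{\sigma}$ along the horizontal lift of a generating geodesic of $F$ and to use the Horizontal Curvature Equation to show $\tilde{J}$ remains horizontal and parallel; the Frobenius argument seems cleaner in that it avoids any ODE continuation.)
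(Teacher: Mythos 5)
Your proposal is correct and takes essentially the same route as the paper's proof: the Horizontal Curvature Equation together with $\mathrm{sec}_{M}\geq 0$ forces $\mathrm{sec}_{M}=0$ and $A=0$ on horizontal lifts of planes tangent to $\exp (\sigma )$, the lifted $2$--plane distribution is therefore integrable with totally geodesic (hence flat, horizontal) leaves, and $\exp (\tilde{\sigma})$ is identified with the leaf through the base point. Your write-up simply makes explicit the Frobenius and O'Neill covariant-derivative computations that the paper's terser proof leaves implicit.
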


\begin{proof}
The Horizontal Curvature Equation implies that any horizontal lift $\hat{\tau%
}$ of a plane $\tau $ tangent to $\exp (\sigma )$ satisfies 
\begin{equation*}
\mathrm{sec}_{M}\left( \hat{\tau}\right) =0\text{ and }A\left( \hat{\tau}%
\right) =0.
\end{equation*}%
In particular, the collection of all such $\hat{\tau}$s gives us an
integrable $2$-dimensional distribution that is horizontal. The vanishing $A$%
--tensor combined with our hypothesis that $\exp (\sigma )$ is totally
geodesic gives us that all the integral submanifolds of this distribution
are also totally geodesic. If $\tilde{\sigma}$ is a horizontal lift of $%
\sigma $, then it follows that $\exp (\tilde{\sigma})$ is tangent to this
distribution and hence is a totally geodesic flat that is everywhere
horizontal.
\end{proof}

We now proceed with proofs of theorems \ref{tgft} and \ref{t1}.

\begin{proof}[Proof of Theorem \protect\ref{tgft}]
When the fibers of a Riemannian submersion are totally geodesic, the $T$%
-tensor for the submersion vanishes. If $V$ is a holonomy field along a
horizontal geodesic $\gamma $, by (\ref{hfd}) we have 
\begin{equation*}
\langle V(t),V(t)\rangle ^{\prime }=2\langle V(t),V^{\prime }(t)\rangle
=2\langle V(t),T_{V(t)}\gamma ^{\prime }(t)\rangle =0,
\end{equation*}%
so $V$ has constant norm. An applicaiton of the main lemma completes the
proof.
\end{proof}

In contrast to our other results the proof of Theorem 2 does not use the
main lemma. Instead we exploit the infinitesimal geometry of the submersion.

\begin{proof}[Proof of Theorem~\protect\ref{t1}]

Let $\sigma $ be a zero-curvature plane in $B$ and $\tilde{\sigma}$ a
horizontal lift of $\sigma $ so that $\exp (\tilde{\sigma})$ is contained in
a flat of $M$. Let $\gamma $ be a geodesic in $\exp (\tilde{\sigma})$ and $%
J_{0}$ be a parallel Jacobi field along $\gamma $ such that 
\begin{equation*}
\tilde{\sigma}=\mathrm{span}\left\{ \gamma ^{\prime }\left( 0\right)
,J_{0}\left( 0\right) \right\} .
\end{equation*}

Now $A_{{\gamma }^{\prime }(0)}J_{0}(0)=0$ because $\mathrm{sec}_{M}(\tilde{%
\sigma})=\mathrm{sec}_{B}(\sigma )=0$; so for any holonomy field $V,$ we
have 
\begin{eqnarray*}
\left. \langle {J_{0}}(t),V^{\prime }(t)\rangle \right\vert _{t=0} &=&\left.
\langle J_{0}(t),A_{{\gamma }^{\prime }(t)}V(t)\rangle \right\vert _{t=0},%
\text{ since }J_{0}(0)\text{ is horizontal} \\
&=&-\left. \langle A_{{\gamma }^{\prime }(t)}J_{0}(t),V(t)\rangle
\right\vert _{t=0} \\
&=&0.
\end{eqnarray*}%
On the other hand, differentiating the right hand side of $%
V(t)=btJ_{0}(t)+W(t),$ we find 
\begin{eqnarray*}
\left. \langle {J_{0}}(t),V^{\prime }(t)\rangle \right\vert _{t=0} &=&\left.
\langle {J_{0}}(t),bJ_{0}(t)\rangle \right\vert _{t=0}+\left. \langle {J_{0}}%
(t),W^{\prime }(t)\rangle \right\vert _{t=0} \\
&=&b\left\vert J_{0}(0)\right\vert ^{2}.
\end{eqnarray*}%
Therefore $b=0$ and $V=W$, and it follows that $N:=\exp ({\tilde{\sigma}})$
is everywhere horizontal. Thus its projection, $\exp \left( \sigma \right) $%
, is a totally geodesic flat in $B.$

By Lemma \ref{Stupid}, every horizontal lift of $\sigma $ exponentiates to a
horizontal flat in $M.$
\end{proof}

\section{The Holonomy of $\protect\pi $}

In this section we prove Theorem~\ref{t2} by showing that such submersions
have bounded holomomy fields and hence satisfy the hypotheses of the main
lemma.  At the end of the section we prove Corollary \ref{Associated}.

Given a point $b\in B,$ we define the \textit{holonomy group} $\mathrm{hol}%
(b)$ to be the group of all diffeomorphisms of the fiber $\pi ^{-1}(b)$ that
occur as holonomy diffeomorphisms $h_{c}:\pi ^{-1}(b)\rightarrow \pi
^{-1}(b) $ obtained by lifting piecewise smooth loops $c$ at $b$. If $M$ is
compact, the $T$ tensor is globally bounded in norm. It follows that each
holonomy diffeomorphism $h_{c}$ is Lipschitz with Lipschitz constant
dependent only on the length of $c$ (see ~\cite{GuiWals}, Lemma 4.2). Since
this Lipschitz constant can actually depend on the length of $c,$ this is
generally not enough to conclude that the the holonomy fields are uniformly
bounded (see \cite{Tapp}, Example 6.1]).

On the other hand, if $B$ is compact and $\mathrm{hol}(b)$ is a compact,
finite-dimensional Lie group, then there is a uniform Lipschitz constant for
all of $\mathrm{hol}(b)$. Thus the holonomy fields are uniformly bounded (%
\cite{Tapp}, Proposition 6.2). So to prove theorem~\ref{t2}, it suffices to
show that $\mathrm{hol}(b)$ is a compact, finite-dimensional Lie group.

\begin{proof}[Proof of Theorem~\protect\ref{t2}]
Set $B=M/G,$ and for $p\in M,$ let $G_{p}$ denote the isotropy subgroup of $%
G $. Note that the map $f:G/G_{p}\rightarrow M$ given by $f(gG_{p})=g(p)$ is
an imbedding onto the orbit $G(p)$ of $p$. Now take any piecewise smooth
curve $c:[0,1]\rightarrow B$. The holonomy diffeomorphism 
\begin{equation*}
h_{c}:\pi ^{-1}(c(0))\rightarrow \pi ^{-1}(c(1))
\end{equation*}%
is defined by 
\begin{equation*}
h_{c}(p)=\bar{c}(1),
\end{equation*}%
where $\bar{c}$ is the unique horizontal lift of $c$ starting at $p$. By
assumption, $G$ acts isometrically on $M$, so $g\bar{c}$ is also horizontal.
Since $(g\bar{c})(1)=g(\bar{c}(1))$, we have that 
\begin{equation*}
h_{c}(gp)=gh_{c}(p).
\end{equation*}%
In other words, $h_{c}$ is $G$-equivariant.

By the above, $\mathrm{hol}(b)$ is a subgroup of the collection $\mathrm{Diff%
}_{G}(\pi ^{-1}(b))$ of all $G$-equivariant diffeomorphisms of the fiber $%
\pi ^{-1}(b)$. Take any $p\in \pi ^{-1}(b)$. Set $H\equiv G_{p},$ and
identify $\pi ^{-1}(b)$ with $G/H$. Then $\mathrm{Diff}_{G}(G/H)$ is
isomorphic to the Lie group $N(H)/H,$ where $N(H)$ is the normalizer of $H$
(see \cite{GromWals}, Lemma 2.3.3).

In \cite{Wilk2}, Wilking associates to a given metric foliation $\mathcal{F}$
the so-called \textit{dual foliation} $\mathcal{F}^{\#}$. The dual leaf
through a point $p\in M$ is defined as all points $q\in M$ such that there
is a piecewise smooth, horizontal curve from $p$ to $q$. Let $L_{p}^{\#}$ be
the dual leaf through $p$.

We shall see that for any $p\in M,$ $\mathrm{hol}(b)$ is homeomorphic to $%
L_{p}^{\#}\cap \pi ^{-1}(b).$

We have the continuous map 
\begin{equation*}
\mathrm{ev}_{p}:\mathrm{hol}(b)\rightarrow L_{p}^{\#}\cap \pi ^{-1}(b)
\end{equation*}
defined by 
\begin{equation*}
\mathrm{ev}_{p}:h_{c}\mapsto h_{c}(p).
\end{equation*}

To construct the inverse, let $q$ be in $L_{p}^{\#}\cap \pi ^{-1}(b)$. There
is a piecewise smooth, horizontal curve $\bar{c}$ from $p$ to $q$. Now $\pi
\circ \bar{c}$ is a piecewise smooth loop at $b$ and 
\begin{equation*}
h_{\pi \circ \bar{c}}(p)=q.
\end{equation*}

We therefore propose to define $\mathrm{ev}_{p}^{-1}$ by 
\begin{equation*}
\mathrm{ev}_{p}^{-1}:q\longmapsto h_{\pi \circ \bar{c}}.
\end{equation*}

To see that $\mathrm{ev}_{p}^{-1}$ is well-defined, suppose $\tilde{c}$ is
another piecewise smooth, horizontal curve from $p$ to $q$. By construction,
we have $h_{\pi \circ \bar{c}}(p)=h_{\pi \circ \tilde{c}}(p).$ Since all
holonomy diffeomorphisms are $G$-equivariant and $G$ acts transitively on $%
\pi ^{-1}(b),$ it follows that 
\begin{equation*}
h_{\pi \circ \bar{c}}=h_{\pi \circ \tilde{c}}.
\end{equation*}%
Now take a sequence of points $q_{i}\in L^{\#}\cap \pi ^{-1}(b)$ converging
to $q_{0}\in L^{\#}\cap \pi ^{-1}(b)$. There are horizontal curves $\bar{c}%
_{i}$ from $p$ to $q_{i}$ such that $h_{\pi \circ \bar{c}_{i}}(p)=q_{i}$.
Again by $G$-equivariance and the transitive action of $G$, these holonomy
diffeomorphisms are completely determined by their behavior at a point. Thus 
$h_{\pi \circ \bar{c}_{i}}\rightarrow h_{\pi \circ \bar{c}_{0}}$, and so $%
\mathrm{ev}_{p}^{-1}$ is continuous. Therefore $\mathrm{hol}(b)$ is
homeomorphic to $L^{\#}\cap \pi ^{-1}(b)$.

Since $\mathcal{F}$ is given by the orbit decomposition of an isometric
group action, the dual foliation has complete leaves (\cite{Wilk2}, Theorem
3(a)). In particular, this says $L^{\#}\cap \pi ^{-1}(b)\cong \mathrm{hol}%
(b) $ is a closed subset of the compact space $\pi ^{-1}(b)$ and hence is
also compact. It follows that $\mathrm{hol}(b)$ is closed in the Lie group $%
\mathrm{Diff}_{G}(G/H)\cong N(H)/H,$ so is a Lie subgroup of $\mathrm{Diff}%
_{G}(G/H)$. Thus $\mathrm{hol}(b)$ is a compact, finite-dimensional Lie
group.
\end{proof}

\begin{remark}
In general, $\mathrm{hol}(b)$ need not even be a Lie group, let alone a
compact Lie group \cite{Tapp}. However, it is shown in \cite{GuiWals0} that
when the fibers come from principal $G$-actions, $\mathrm{hol}(b)$ is always
a Lie group.
\end{remark}

Recall (see ~\cite{GromWals}, p.92) that if $P$ is the total space of the
principal $G$--bundle $\pi _{P}:P\rightarrow B:=P/G$ and $F$ is a manifold
that carries a $G$--action, then $G$ acts freely on the product $P\times F$.
In particular, if $P$ and $F$ have $G$-invariant metrics of non-negative
curvature, $G$ acts by isometries on the product $P\times F$. As a result,
the total space $E=P\times _{G}F:=(P\times F)/G$ of the associated bundle
inherits a metric of non-negative curvature such that the quotient map $%
Q:P\times F\rightarrow P\times _{G}F$ is a Riemannian submersion \cite{Cheeg}%
. Similarly, $B$ inherits a metric of non-negative curvature such that $\pi
_{P}:P\rightarrow B$ is a Riemannian submersion. If $\pi _{1}:P\times
F\rightarrow P$ is projection onto the first factor, the diagram

\begin{equation*}
\begin{CD} P\times F @>Q>> E\\ @V{\pi_1}VV @VV{\pi}V\\ P @>>\pi_P> B \end{CD}
\end{equation*}%
commutes and so $\pi :E\rightarrow B$ is also a Riemannian submersion.

\noindent

\begin{proof}[Proof of Corollary \protect\ref{Associated}: ]
Consider the composition 
\begin{equation*}
\pi _{P}\circ \pi _{1}:P\times F\longrightarrow B.
\end{equation*}

The holonomy fields for $\pi _{P}\circ \pi _{1}$ are the products of
holonomy fields for $\pi _{P}:P\rightarrow B$ and $\pi _{1}.$ The former are
bounded by the proof of Theorem \ref{t2}, the latter are bounded because the
fibers of $\pi _{1}$ are totally geodesic.

Now suppose that $\tilde{\sigma}$ is a horizontal zero-curvature plane for $%
\pi :E\longrightarrow B$ such that $\exp _{p}(\tilde{\sigma})$ is a flat.
Apply Lemma \ref{Stupid} to $Q:P\times F\rightarrow E$ to conclude that any
horizontal lift $\tilde{\sigma}_{P\times F}$ of $\tilde{\sigma}$
exponentiates to a ($Q$--horizontal) flat. Since the holonomy fields of $\pi
_{P}\circ \pi _{1}=\pi \circ Q$ are bounded, we can apply Lemma \ref{cl} to
conclude that $\sigma :=d\left( \pi \circ Q\right) \left( \tilde{\sigma}%
_{P\times F}\right) =d\pi \left( \tilde{\sigma}\right) $ is a zero plane
that exponentiates to a flat. Applying Lemma \ref{Stupid} to $\pi
:E\rightarrow B$ we conclude that every horizontal lift of $\sigma $ is a
horizontal flat.
\end{proof}

\begin{remark}
Combining the Main Lemma with the concept of projectable Jacobi fields from 
\cite{GromWals} one gets a shorter (but more learned) proof of the Corollary.
\end{remark}

\end{document}